\documentclass[11pt]{article}
\usepackage[letterpaper,hmargin=1.1in,vmargin=1in]{geometry}
\usepackage[numbers,compress,sort]{natbib}
\usepackage[T1]{fontenc}
\usepackage[utf8]{inputenc}
\usepackage{lmodern}
\usepackage{amsmath,amsfonts,amsthm}
\usepackage{aliascnt}
\usepackage{microtype}
\usepackage{xcolor}
\usepackage{tikz}
\usetikzlibrary{arrows.meta,calc,decorations.pathreplacing}
\usepackage[font=small,labelfont=bf,labelsep=period]{caption}
\definecolor{singleton}{HTML}{237F87}
\definecolor{overlap}{HTML}{B8752C}
\definecolor{ink}{HTML}{26323B}
\definecolor{guide}{HTML}{838B91}
\definecolor{link}{HTML}{1F4E79}
\usepackage[colorlinks=true,linkcolor=link,citecolor=link,urlcolor=link]{hyperref}
\usepackage[capitalize,noabbrev]{cleveref}

\newcommand{\R}{\mathbb R}
\newcommand{\Z}{\mathbb Z}
\newcommand{\x}{\mathbf{x}}
\newcommand{\z}{\mathbf{z}}
\DeclareMathOperator{\conv}{conv}
\DeclareMathOperator{\intr}{int}
\newtheorem{theorem}{Theorem}
\newaliascnt{lemma}{theorem}
\newtheorem{lemma}[lemma]{Lemma}
\aliascntresetthe{lemma}
\crefname{lemma}{Lemma}{Lemmas}
\title{A centerpoint theorem for three planar convex bodies}
\author{Hongyu Cheng\thanks{Department of Applied Mathematics and Statistics, Johns Hopkins University, Baltimore, MD 21218, USA. E-mail addresses: \texttt{hongyucheng@jhu.edu}, \texttt{basu.amitabh@jhu.edu}.} \and Amitabh Basu\footnotemark[1]}
\date{}

\begin{document}
\maketitle

\begin{abstract}
For planar convex bodies $A_0,A_1,A_2$ satisfying $\frac12(A_0+A_2)\subseteq A_1$, we prove that the union of the three slices $\{i\}\times A_i\subseteq\R^3$, $i=0,1,2$, contains a point such that every closed halfspace containing it captures at least $2/9$ of their total area. This establishes the three-slice case of Oertel's mixed-integer centerpoint conjecture in $\Z\times\R^2$ with the best possible constant.
\end{abstract}

\section{Introduction}\label{sec:intro}

Gr\"unbaum's inequality~\cite{Grunbaum} guarantees the fraction $(d/(d+1))^d$ of the volume of a convex body (nonempty compact convex set) in every halfspace containing its centroid in $\R^d$. Write $|\cdot|$ for $d$\nobreakdash-dimensional volume and $\mu(E)=\sum_{\z\in\Z^n}|\{\x\in\R^d:(\z,\x)\in E\}|$ for the mixed-integer volume of a Borel set $E\subseteq\R^{n+d}$. Oertel~\cite[Conjecture~4.1.20]{Oertel} conjectured that every compact convex set $C\subseteq\R^{n+d}$ with $\mu(C)>0$ contains a point in $\Z^n\times\R^d$ such that every closed halfspace $H$ containing it satisfies
\[
\mu(C\cap H)\ge2^{-n}\left(\frac{d}{d+1}\right)^d\mu(C).
\]
This fraction, equal to $2/9$ for $n=1$ and $d=2$, controls the oracle complexity of centerpoint-based cutting plane methods for mixed-integer convex minimization~\cite{BasuOertel}.

A Helly-type argument gives the fraction $1/(2^n(d+1))$ for every $C$~\cite[Corollary~3.4]{BasuOertel}, but falls short of the conjectured fraction for $d\ge2$. Basu and Oertel~\cite[Theorem~3.6]{BasuOertel} proved the conjecture when the projection of $C$ onto $\R^n$ has lattice width above a threshold exponential in $n$, while Cristi and Salas~\cite{CristiSalas} obtained a polynomial threshold, requiring a Euclidean ball of radius $\Omega(d^2n^{3/2})$ in the projection, or an interval of length linear in $d$ when $n=1$. Cheng and Basu~\cite{ChengBasu} proved that an $\ell_\infty$ ball of radius $k\ge\frac{3e}2(n+d)$ in the projection guarantees the fraction $1/e-3(n+d)/(2k)$, whereas a radius $o(n+d)$ guarantees no dimension-independent positive fraction. The results above that reach the conjectured fraction all require the projection of $C$ onto $\R^n$ to be large, which for $n=1$ means that $C$ has many slices $C\cap(\{z\}\times\R^d)$, $z\in\Z$. Much less is known when the slices are few. The cases of one or two slices of positive volume follow from Gr\"unbaum's inequality on the largest slice, which carries at least half of the mixed-integer volume, but to the best of our knowledge the conjecture was open already for three slices in $\Z\times\R^2$.

We prove the three-slice case with the sharp constant $2/9$. To state the result, let $A_0,A_1,A_2\subseteq\R^2$ be convex bodies, scaled so that $|A_0|+|A_1|+|A_2|=1$. Put $S=\bigcup_{i=0}^2(\{i\}\times A_i)$, call $\mu(S\cap E)$ the {\em mass} of a Borel set $E\subseteq\R^3$ (\cref{fig:setup}), and define the {\em halfspace depth} $h_S(\mathbf y)=\inf_{H\ni\mathbf y}\mu(S\cap H)$ for $\mathbf y\in\R^3$, where $H$ ranges over closed halfspaces in $\R^3$.

\begin{theorem}\label{thm:main}
Let $A_0,A_1,A_2$ be planar convex bodies with $|A_0|+|A_1|+|A_2|=1$ and $\frac12(A_0+A_2)\subseteq A_1$. Then there exists $\mathbf y\in S$ such that $h_S(\mathbf y)\ge2/9$. In particular, if $C\subseteq\R^3$ is a compact convex set whose slices $C\cap(\{z\}\times\R^2)$ have positive area for exactly three integers $z$, then $C\cap(\Z\times\R^2)$ contains a point such that every closed halfspace $H$ containing it satisfies $\mu(C\cap H)\ge\frac29\mu(C)$.
\end{theorem}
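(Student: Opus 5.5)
The plan is to first dispose of the ``in particular'' part by reduction to the main statement, and then prove the main statement by choosing a point in the middle slice and splitting into cases according to the masses $a_i=|A_i|$.

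\textbf{Reduction.} If $C$ has positive-area slices exactly at integers $z_0<z_1<z_2$, convexity forces them to be consecutive. Indeed, any integer strictly between $z_0$ and $z_2$ carries a slice containing a convex combination of two positive-area slices, and so has positive area itself. Convexity of $C$ also gives $\frac12(A_{z_0}+A_{z_2})\subseteq A_{z_1}$. After translating in the first coordinate and rescaling $\mu$, this is exactly the setting of \cref{thm:main}. Moreover $\mu(C\cap H)\ge\mu(S\cap H)$, since $\mu$ only sees the three slices.

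\textbf{Case $a_1\ge\frac12$.} I take $\mathbf y=(1,g_1)$ with $g_1$ the centroid of $A_1$. Every closed halfspace $H\ni\mathbf y$ either contains the whole plane $\{1\}\times\R^2$ or meets it in a closed halfplane through $g_1$. By Gr\"unbaum's inequality in $\R^2$, $\mu(S\cap H)\ge\frac49a_1\ge\frac29$. The same argument disposes of any slice with $a_0\ge\frac12$ or $a_2\ge\frac12$. So we may assume every $a_i<\frac12$.

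\textbf{Main case.} Here I would use two structural facts.
\begin{itemize}
\item By Brunn--Minkowski, the hypothesis gives $\sqrt{a_1}\ge\frac12(\sqrt{a_0}+\sqrt{a_2})$.
\item For a halfspace $H=\{(t,\x):\langle u,\x\rangle+\lambda t\le\beta\}$, the traces $H_t$ on the three planes are parallel halfplanes with linearly varying offsets. Hence $\frac12\bigl((H_0\cap A_0)+(H_2\cap A_2)\bigr)\subseteq H_1\cap A_1$.
\end{itemize}
Writing $m_i=|A_i\cap H_i|$, the second fact and Brunn--Minkowski give $\sqrt{m_1}\ge\frac12(\sqrt{m_0}+\sqrt{m_2})$. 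The same holds for the complementary halfspace.

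I would then choose $\mathbf y=(1,c)$ with $c$ a suitable ``weighted center''. My first candidate is the point $c$ with $\frac12(c_0+c_2)=c$, where $c_0,c_2$ are Gr\"unbaum-type centers of $A_0,A_2$ chosen compatibly, for instance the centroids of the cone-like interpolation $\bigcup_t\{t\}\times((1-\frac t2)A_0+\frac t2A_2)$. If that fails, I would fall back on a degree/Helly argument: find $c\in A_1$ such that every halfspace through $(1,c)$ captures a prescribed share of each outer slice. Then any $H\ni\mathbf y$ satisfies $m_1\ge\frac49a_1$ or the whole middle plane, and one of $m_0,m_2$ is controlled from below. Combined with the square-root concavity of the triple $(m_0,m_1,m_2)$ and its complement, this reduces the claim to a finite-dimensional inequality. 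That inequality is: minimize $m_0+m_1+m_2$ subject to the Brunn--Minkowski constraints for $H$ and for its complement, Gr\"unbaum-type lower bounds on each slice, $\sum a_i=1$, and $a_i<\frac12$. The value should be $\frac29$, attained in the limit by a degenerate cone-type configuration $A_i=i\,T$ with masses proportional to $0,1,4$.

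\textbf{Main obstacle.} The hardest step is producing a single point $c$ that works simultaneously for all halfspaces. This includes the tilted ones, which cut $A_0$ and $A_2$ at offsets differing from that at $A_1$; the Gr\"unbaum bounds for the outer slices then hold only relative to shifted centers. I would first try to handle this by parametrizing $H$ by its direction $u$ and tilt $\lambda$. For fixed $u$, the mass is monotone in the offset, so the worst $\lambda$ can be found explicitly. The problem then becomes a one-parameter family of planar Gr\"unbaum-type estimates, and I would close the remaining nonconvex optimization by a direct, if tedious, case analysis on which slices the boundary line meets.
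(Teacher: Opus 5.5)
Your reduction of the mixed-integer statement is correct and agrees with the paper, and so is your disposal of the case where some slice has area at least $\frac12$.

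The gap is the main case, which carries the whole content of the theorem. You propose candidate points and a finite-dimensional optimization but prove neither, and the step you call the main obstacle is exactly the missing argument. Moreover, your candidates fail.

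Your fallback requires $m_1\ge\frac49a_1$ for every halfspace through $(1,c)$. For a triangular $A_1$ this forces $c$ to be its centroid $g$, and you also need a lower bound on $m_0$ or $m_2$. Let $A_1=T$ be a triangle and $A_0=A_2=K$ the trapezoid left when the line through $g$ parallel to a side cuts off the opposite vertex cap. Then $\frac12(A_0+A_2)=K\subseteq T$, $|T|=\frac9{19}$ and $|K|=\frac5{19}$. The closed halfspace $\{(t,\x):\langle\mathbf n,\x\rangle\ge\langle\mathbf n,g\rangle\}$ containing the cap passes through $(1,g)$ and meets the outer slices only along an edge of $K$. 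Its mass is therefore $\frac4{19}<\frac29$.

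In this example your Gr\"unbaum bound on the middle slice and your Brunn--Minkowski constraint for the complement both hold with equality. So those constraints alone cannot yield $\frac29$: an outer-slice lower bound is indispensable, and at $g$ none exists. In fact no point admits ``a prescribed share of each outer slice,'' since tilting a halfspace through $(1,c)$ empties one outer trace.

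Your first candidate, $c=\frac12(c_0+c_2)$ built from centers of $A_0$ and $A_2$, fails too. Take $A_0$ a tiny disk, $A_2$ a disk, and $A_1$ the convex hull of the disk $\frac12(A_0+A_2)$ with a point placed so that $|A_1|=|A_2|$. Take the halfspace whose middle trace is the half of that disk facing away from the point, tilted to miss $A_2$. Its mass is about $\frac1{16}$.

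Finally, $A_i=iT$ is not extremal. Its masses $0,\frac15,\frac45$ put it in the Gr\"unbaum case, with depth at least $\frac{16}{45}$. The true extremal ($A_1=A_2$ a triangle, $A_0$ shrinking to its centroid) lies on the boundary $a_2\to\frac12$ of your main case.

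The missing idea is not to exhibit the point at all. The paper argues by contradiction.
\begin{itemize}
\item If every point of $\{1\}\times A_1$ had depth below $\frac29$, compactness gives finitely many open halfspaces of mass below $\frac29$ whose traces cover $A_1$.
\item Unless two traces already cover $A_1$ up to measure zero, Helly's theorem gives three traces. After shrinking one, their boundary lines are concurrent at an interior point of $A_1$.
\item Tsintsifas's six-sector inequality then produces two halfspaces $H^1,H^2$ with $\tau\ge\frac92\sigma^2$. Here $\sigma$ and $\tau$ are the areas of $A_1$ covered by neither and by both traces.
\item Inclusion--exclusion against $\mu(S\cap H^1)+\mu(S\cap H^2)<\frac49$ forces the uncovered outer areas to satisfy $u+v>\frac12+\frac92(\sigma-\frac19)^2$.
\item The uncovered sets $K_i=A_i\setminus(H^1_i\cup H^2_i)$ inherit $\frac12(K_0+K_2)\subseteq K_1$, because the complement of $H^1\cup H^2$ is convex. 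Brunn--Minkowski then gives $\sqrt u+\sqrt v\le2\sqrt\sigma$, which with $u,v\le\frac12$ contradicts the previous bound.
\end{itemize}
The essential ingredient absent from your plan is this pair of halfspaces. Brunn--Minkowski is applied to the region missed by two halfspaces whose middle traces are linked by Tsintsifas's inequality. It is not applied to the complement of one halfspace through a chosen center, which the trapezoid example shows is insufficient even with Gr\"unbaum on the middle slice.
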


For sharpness, take $A_1=A_2=\Delta$, a triangle of area $(1-\varepsilon)/2$ with $0<\varepsilon<1/3$, and let $A_0$ be the homothetic copy of $\Delta$ about its centroid with area $\varepsilon$. The midpoint inclusion $\frac12(A_0+A_2)\subseteq A_1$ follows from $A_0\subseteq\Delta$. Every point $\x\in\Delta$ has a barycentric coordinate of at least $1/3$, and the line through $\x$ parallel to the opposite side cuts off a triangle of area at most $(2/3)^2|\Delta|=\frac49|\Delta|$ at the corresponding vertex. For $i\in\{1,2\}$, tilting a plane about this line gives a closed halfspace through $(i,\x)$ that meets $\{i\}\times\Delta$ in this triangle and misses $\{3-i\}\times\Delta$, hence has mass at most $\frac49|\Delta|+\varepsilon=2/9+7\varepsilon/9$. The halfspace $\{\mathbf y\in\R^3:y_1\le1/2\}$ meets $S$ only in $\{0\}\times A_0$, so the points of this slice have depth at most $\varepsilon$, which proves sharpness as $\varepsilon\downarrow0$. The mixed-integer conclusion is also sharp, since $C=\conv S$ satisfies $C\cap(\Z\times\R^2)=S$.

The proof of \cref{thm:main} rests on a planar configuration that Gr\"unbaum introduced for a different purpose. In his survey of measures of symmetry~\cite{GrunbaumSymmetry}, he conjectured that when three concurrent lines divide a planar convex body into six sectors whose areas alternate between two values, the smaller value is at least half the larger. Lillington~\cite{Lillington} proved this bound, and Tsintsifas~\cite{Tsintsifas} proved the stronger conjecture of Gardner, Kwapie\'n and Laurie~\cite{GardnerKwapienLaurie} that, for arbitrary sector areas, the ratios of three pairwise nonadjacent sector areas to the opposite ones sum to at least $3/2$. Equality holds for a triangle and the lines through its centroid parallel to its sides, each cutting off $4/9$ of the area, the extremal fraction in Gr\"unbaum's inequality for $d=2$.

In our proof, these three lines come from Helly's theorem. If every point of the middle slice lies in a halfspace of mass less than $2/9$, then at most three such halfspaces already cover this slice. When no two of them suffice, one of them can be shrunk so that their boundaries meet the slice along three concurrent lines. The six sectors are then covered alternately by one and by two of the halfspaces, and Tsintsifas's inequality, with the doubly covered sectors in the numerators, shows that for two of the halfspaces the area of the middle slice covered by both is at least $9/2$ times the square of the area left uncovered (\cref{lem:covering}). Since these two halfspaces have total mass less than $4/9$, they must leave large portions of the outer slices uncovered, which contradicts the Brunn--Minkowski inequality applied to the uncovered portions of the three slices.

\begin{figure}[t]
\centering
\begin{tikzpicture}[x=1.15cm,y=1.05cm,font=\small,line cap=round,line join=round]
  \def\sp{3.55}
  \def\kx{0.72}
  \def\ky{0.24}
  \def\lam{0.62}
  \foreach \t/\r in {0/0.92,1/1.06,2/1.00}{
    \begin{scope}[cm={\kx,\ky,0,1,(\sp*\t,0)}]
      \fill[black!2] (-1.3,-1.35) rectangle (1.3,1.35);
      \draw[guide!65,line width=0.35pt] (-1.3,-1.35) rectangle (1.3,1.35);
      \fill[white] (0,0) circle[radius=\r];
      \begin{scope}
        \clip (0,0) circle[radius=\r];
        \fill[singleton!30] (-2,-2) rectangle (2,{\lam*(1-\t)});
      \end{scope}
      \draw[ink,line width=0.7pt] (0,0) circle[radius=\r];
      \draw[ink,line width=0.65pt,postaction={decorate,draw=ink,line width=0.3pt,
        decoration={border,angle=-45,amplitude=0.7mm,segment length=1.2mm}}]
        (-1.3,{\lam*(1-\t)}) -- (1.3,{\lam*(1-\t)});
      \node[anchor=north,inner sep=2pt] at (0,{-\r-0.04}) {$A_{\t}$};
    \end{scope}
    \node at (\sp*\t,-1.95) {$i=\t$};
  }
  \path[fill=overlap!12,fill opacity=0.35,draw=ink!65,line width=0.45pt]
    ({-0.22*\sp-1.16*\kx},{1.22*\lam-1.16*\ky}) --
    ({-0.22*\sp+1.16*\kx},{1.22*\lam+1.16*\ky}) --
    ({2.22*\sp+1.16*\kx},{-1.22*\lam+1.16*\ky}) --
    ({2.22*\sp-1.16*\kx},{-1.22*\lam-1.16*\ky}) -- cycle;
  \node[anchor=west,inner sep=3pt] at ({2.22*\sp+1.16*\kx},{-1.22*\lam+1.16*\ky}) {$\partial H$};
\end{tikzpicture}
\caption{The mass $\mu(S\cap H)$ of a halfspace $H$ is the total area of the shaded portions of $S$.}
\label{fig:setup}
\end{figure}

\section{A planar covering lemma}\label{sec:planar}

\begin{lemma}\label{lem:covering}
Let $A\subseteq\R^2$ be a convex body, let $\beta>0$, and let $G^1,\ldots,G^m\subseteq\R^2$ be open halfplanes covering $A$ with $|A\cap G^j|\le\beta$ for $j=1,\ldots,m$. Then there are indices $r,s\in\{1,\ldots,m\}$, possibly equal, such that $\sigma^2\le\beta\tau$, where $\sigma=|A\setminus(G^r\cup G^s)|$ and $\tau=|A\cap G^r\cap G^s|$.
\end{lemma}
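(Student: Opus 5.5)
If a single halfplane or a pair of the $G^j$ already covers $A$, the choice is immediate: take $r,s$ to be those indices (with $r=s$ allowed) to get $\sigma=0$. Otherwise the compact convex sets $A\setminus G^j$ have empty intersection, so by Helly's theorem in the plane some three of them already do. After relabeling, $G^1,G^2,G^3$ cover $A$ and no two of them do.

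\textbf{Monotonicity of the inequality under shrinking.} If we replace a halfplane $G^j$ by a smaller parallel open halfplane $G'^j\subseteq G^j$, then:
\begin{itemize}
\item $|A\cap G'^j|\le\beta$ still holds;
\item for every pair $r,s$, $\sigma$ can only increase and $\tau$ can only decrease.
\end{itemize}
So an inequality $\sigma'^2\le\beta\tau'$ for the shrunk family implies $\sigma^2\le\beta\tau$ for the original family. We therefore translate the boundary of one halfplane, say $G^3$, inward as long as $G^1,G^2,G^3$ still cover $A$. If at some moment two of them cover $A$, we are done as above. Otherwise we stop at the extreme position. There the three boundary lines pass through a common point $p$ (of $A$, or one can argue with the part of the plane meeting $A$), since the uncovered triangle $\R^2\setminus(G^1\cup G^2\cup G^3)$ has shrunk to a point.

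\textbf{Sector structure.} The three concurrent lines cut $A$ into six sectors, covered alternately once and twice. For each $t\in\{1,2,3\}$, let $u_t$ be the area of the sector covered only by $G^t$. Let $d_t$ be the area of the vertically opposite sector, which is covered exactly by the other two halfplanes, say $G^r,G^s$ with $\{r,s,t\}=\{1,2,3\}$. Then:
\begin{itemize}
\item $A\setminus(G^r\cup G^s)$ is essentially the $u_t$-sector, so $\sigma_{rs}=u_t$;
\item $\tau_{rs}\ge d_t$;
\item $G^t$ covers the $u_t$-sector together with its two neighbouring doubly covered sectors, namely the $d_a$- and $d_b$-sectors with $\{a,b\}=\{1,2,3\}\setminus\{t\}$. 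Hence $\beta\ge u_t+d_a+d_b$.
\end{itemize}
If some $u_t=0$ we are done. Otherwise set $x_t=d_t/u_t$. Tsintsifas's inequality for three concurrent lines, with the doubly covered sectors in the numerators, gives $x_1+x_2+x_3\ge 3/2$.

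\textbf{Contradiction argument.} Suppose that for all $t$ we have $u_t^2>\beta d_t$, that is, $u_t>\beta x_t$. Then $d_a=x_au_a>\beta x_a^2$, and the bound on $|A\cap G^t|$ yields
\[
\beta\ \ge\ u_t+d_a+d_b\ >\ \beta\,(x_t+x_a^2+x_b^2).
\]
Dividing by $\beta$ and summing over $t$ gives $3>\sum x_t+2\sum x_t^2$. Write $s=\sum x_t\ge 3/2$. By Cauchy--Schwarz, $\sum x_t^2\ge s^2/3\ge 3/4$, so the right side is at least $3/2+3/2=3$. This is a contradiction, so some $t$ gives $\sigma_{rs}^2=u_t^2\le\beta d_t\le\beta\tau_{rs}$.

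\textbf{Main obstacle.} I expect the hardest step to be the shrinking and concurrency reduction. One must check that the extreme position really produces three lines concurrent at a point where Tsintsifas's inequality applies, including degenerate cases where the common point lies on $\partial A$ or some sectors have zero area. One must also check that the sector identifications $\sigma_{rs}=u_t$ and the three-sector decomposition of $A\cap G^t$ are correct up to boundary sets of measure zero.
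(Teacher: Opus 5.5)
\textbf{The gap is the reduction to three concurrent lines.} Everything after that step matches the paper's argument: the bound $\beta\ge u_t+d_a+d_b$, summing to $3\beta\ge\sum u_t+2\sum d_t$, Tsintsifas's inequality with the doubly covered sectors in the numerators, and Cauchy--Schwarz. The concurrency claim, however, is asserted and not proved, and the reason you give does not work. Translating $\partial G^3$ inward can only \emph{enlarge} $\R^2\setminus(G^1\cup G^2\cup G^3)$. What collapses to a point is the triply covered triangle $G^1\cap G^2\cap G^3$, and that picture already assumes the three halfplanes cover the whole plane. You only know they cover $A$. Without more, the ``extreme position'' is where $\partial G^3$ first touches the compact set $A\setminus(G^1\cup G^2)$. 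Nothing forces this contact point onto $\partial G^1\cap\partial G^2$: if that intersection point $O$ is not in $A$, the line moves past $O$ and the three lines are not concurrent. (Two halfplanes do cover $A$ in that case, but proving it needs exactly the argument you are missing.)

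\textbf{The missing idea is that $G^1,G^2,G^3$ cover all of $\R^2$.} Apply Helly's theorem to the four convex sets $A,\R^2\setminus G^1,\R^2\setminus G^2,\R^2\setminus G^3$. Their common intersection is empty. Every triple containing $A$ is nonempty, because no two halfplanes cover $A$. Hence $\bigcap_{j=1}^3(\R^2\setminus G^j)=\emptyset$. The paper builds this in from the start by taking a minimal Helly subfamily of $\{A\}\cup\{\R^2\setminus G^j\}$, which cannot contain $A$. Three consequences follow:
\begin{itemize}
\item no two boundaries are parallel;
\item the closed wedge $\R^2\setminus(G^1\cup G^2)$, with apex $O=\partial G^1\cap\partial G^2$, lies in $G^3$;
\item translating $\partial G^3$ to pass through $O$ gives a closed halfplane $\widetilde G^3\subseteq G^3$ with $G^1\cup G^2\cup\widetilde G^3=\R^2$.
\end{itemize}
So concurrency is immediate and no extreme position is needed.

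You must still show $O\in\intr A$, since Tsintsifas's inequality requires it. The paper picks $\mathbf p_j\in A$ outside the other two halfplanes and off the axes, which is possible because no two halfplanes cover $A$ up to a null set, and shows $O\in\intr\conv\{\mathbf p_1,\mathbf p_2,\mathbf p_3\}$. Alternatively, in your framework: if $O\notin\intr A$, then $A$ lies in a closed halfplane through $O$. Its open complement contains two full adjacent sectors, and the sectors alternate, so one of them is singly covered. Hence some $u_t=0$ and your trivial case applies. With these two steps supplied, the rest of your proof goes through as written.
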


\begin{proof}
If two of the halfplanes, possibly equal, cover $A$ up to a set of area zero, then $\sigma=0$ and the conclusion holds. Assume from now on that no two of the halfplanes cover $A$ up to a set of area zero. Since the halfplanes cover $A$, the convex sets $A$ and $\R^2\setminus G^j$, $j=1,\ldots,m$, have empty intersection, and Helly's theorem gives a minimal subfamily of at most three of them with empty intersection. This subfamily has three members and does not contain $A$, since otherwise at most two halfplanes would cover $A$. Thus, after relabeling, we obtain three halfplanes $G^1,G^2,G^3$ that cover the plane, whereas no two do so.

\begin{figure}[t]
\centering
\begin{tikzpicture}[x=1.04cm,y=1.04cm,font=\footnotesize,line cap=round,line join=round,
  original/.style={ink,dashed,line width=0.55pt},
  moved/.style={ink,line width=0.65pt},
  hatch/.style={postaction={decorate,draw=ink,line width=0.3pt,
    decoration={border,angle=#1,amplitude=0.7mm,segment length=1.2mm}}},
  motion/.style={-{Stealth[length=3.2pt,width=3pt]},singleton,line width=0.8pt}]
  \begin{scope}
    \node at (0,2.10) {(a)};
    \filldraw[fill=black!2,draw=ink,line width=0.75pt] (0,0) circle[radius=1.35];
    \node[above right,inner sep=2pt] at (1.02,1.02) {$A$};
    \fill[overlap!25] (0,0) -- (-1,0) -- (0,-1) -- cycle;
    \draw[original,-{Stealth[length=3pt,width=3pt]}] (-1.55,0) -- (1.55,0) node[right,inner sep=2pt] {$x_1$};
    \draw[original,-{Stealth[length=3pt,width=3pt]}] (0,-1.55) -- (0,1.55) node[above,inner sep=2pt] {$x_2$};
    \draw[original] (-1.55,0.55) -- (0.55,-1.55);
    \fill[ink] (0,0) circle[radius=1.5pt];
    \node[inner sep=0.5pt] at (0.16,0.16) {$O$};
    \node at (-0.66,0.61) {$x_1<0$};
    \node at (0.67,-0.61) {$x_2<0$};
    \node[align=center] at (0.57,0.57) {$x_1+x_2$\\$>-1$};
  \end{scope}
  \begin{scope}[shift={(4.55,0)}]
    \node at (0,2.10) {(b)};
    \filldraw[fill=black!2,draw=ink,line width=0.75pt] (0,0) circle[radius=1.35];
    \node[above right,inner sep=2pt] at (1.02,1.02) {$A$};
    \draw[original] (-1.55,0.55) -- (0.55,-1.55);
    \node[anchor=south east,inner sep=2pt] at (-1.55,0.55) {$G^3$};
    \draw[moved,hatch=45] (0,-1.55) -- (0,1.55) node[above right,inner sep=2pt] {$G^1$};
    \draw[moved,hatch=-45] (-1.55,0) -- (1.55,0) node[right,inner sep=2pt] {$G^2$};
    \draw[moved,hatch=45] (-1.35,1.35) -- (1.35,-1.35);
    \draw[motion] (-0.95,-0.05) -- (-0.45,0.45);
    \node[below right,inner sep=2pt] at (1.35,-1.35) {$\widetilde G^3$};
    \fill[ink] (0,0) circle[radius=1.5pt];
    \node[inner sep=0.5pt] at (0.16,0.16) {$O$};
  \end{scope}
  \begin{scope}[shift={(9.1,0)}]
    \node at (0,2.10) {(c)};
    \fill[singleton!12] (0,0) circle[radius=1.35];
    \foreach \a/\b in {90/135,180/270,315/360}
      \fill[overlap!48] (0,0) -- (\a:1.35) arc[start angle=\a,end angle=\b,radius=1.35] -- cycle;
    \draw[moved] (0,-1.35) -- (0,1.35) (-1.35,0) -- (1.35,0) (135:1.35) -- (315:1.35);
    \draw[ink,line width=0.75pt] (0,0) circle[radius=1.35];
    \foreach \a/\label in {45/{\sigma_3},112.5/{\tau_2},157.5/{\sigma_1},225/{\tau_3},292.5/{\sigma_2},337.5/{\tau_1}}
      \node at (\a:0.87) {$\label$};
    \node[above right,inner sep=2pt] at (1.02,1.02) {$A$};
    \fill[ink] (0,0) circle[radius=1.5pt];
    \node[inner sep=0.5pt] at (0.18,0.16) {$O$};
  \end{scope}
\end{tikzpicture}
\caption{(a) The halfplanes $G^1,G^2,G^3$ for $a=b=1$, with their common intersection shaded. (b) The boundary of $G^3$ moves to pass through $O$, which gives $\widetilde G^3$. (c) Opposite sectors have areas $\sigma_j$ and $\tau_j$.}
\label{fig:sectors}
\end{figure}

Their boundaries cannot all be parallel, since then two of them would already cover the plane. After relabeling and an area-preserving affine change of coordinates, take $G^1=\{\x\in\R^2:x_1<0\}$ and $G^2=\{\x\in\R^2:x_2<0\}$, with their boundaries meeting at the origin $O$ (\cref{fig:sectors}(a)). The halfplane $G^3$ contains the nonnegative quadrant and therefore has the form $\{\x\in\R^2:a x_1+b x_2>-1\}$ with $a,b\ge0$, both strictly positive because a zero coefficient would allow two halfplanes to cover the plane. Set $\widetilde G^3=\{\x\in\R^2:a x_1+b x_2\ge0\}\subseteq G^3$ (\cref{fig:sectors}(b)), which still contains the nonnegative quadrant, so that $G^1,G^2,\widetilde G^3$ cover the plane. Since no two of $G^1,G^2,G^3$ cover $A$ up to a set of area zero, for each $j=1,2,3$ we can choose a point $\mathbf p_j\in A$ that lies in neither of the other two halfplanes nor on a coordinate axis. Then $\mathbf p_3$ lies in the open first quadrant, while $\mathbf p_1$ and $\mathbf p_2$ lie outside $G^3$, in the open second and fourth quadrants respectively. The segment joining $\mathbf p_1$ and $\mathbf p_2$ therefore lies in $\{\x\in\R^2:a x_1+b x_2<0\}$ and crosses both negative coordinate axes, so the ray from $\mathbf p_3$ through $O$ meets the relative interior of this segment beyond $O$. Thus $O$ lies in the interior of $\conv\{\mathbf p_1,\mathbf p_2,\mathbf p_3\}\subseteq A$, and hence in $\intr A$.

The boundary lines of $G^1,G^2,\widetilde G^3$ pass through the interior point $O$ and divide $A$ into six sectors of positive area, alternately covered by one and by two of these halfplanes. For $j=1,2,3$, let $\sigma_j$ be the area of the sector covered only by the $j$th halfplane and $\tau_j$ the area of the opposite sector, which is covered by the other two (\cref{fig:sectors}(c)). Since each of the three halfplanes covers a part of $A$ of area at most $\beta$, counting multiplicities gives $\sum_{j=1}^3(\sigma_j+2\tau_j)\le3\beta$. On the other hand, Tsintsifas's inequality~\cite[Theorem~1]{Tsintsifas}, applied to these three lines, gives $\sum_{j=1}^3\tau_j/\sigma_j\ge3/2$.

If $\sigma_j^2>\beta\tau_j$ for all $j=1,2,3$, then $\rho_j=\tau_j/\sigma_j$ satisfies $\sigma_j>\beta\rho_j$ and $\sum_{j=1}^3\rho_j\ge3/2$. The Cauchy--Schwarz inequality gives $\sum_{j=1}^3\rho_j^2\ge3/4$, so
\[
3\beta\ge\sum_{j=1}^3(\sigma_j+2\tau_j)>\beta\sum_{j=1}^3(\rho_j+2\rho_j^2)\ge3\beta,
\]
a contradiction. Hence $\sigma_j^2\le\beta\tau_j$ for some $j$, which is the required inequality for the two halfplanes other than the $j$th. It remains valid when $\widetilde G^3$ is replaced by $G^3$, since this can only decrease $\sigma$ and increase $\tau$.
\end{proof}

\section{Proof of the main theorem}\label{sec:proof}

\begin{proof}[Proof of \cref{thm:main}]
If $|A_i|>1/2$ for $i=0$ or $2$, Gr\"unbaum's inequality~\cite{Grunbaum} at the centroid of $A_i$ gives a point of depth at least $\frac49|A_i|>2/9$, so we may assume $|A_0|,|A_2|\le1/2$ and seek a point of depth at least $2/9$ in the middle slice. Suppose for a contradiction that $h_S((1,\x))<2/9$ for every $\x\in A_1$. Each such point lies in a closed halfspace of mass less than $2/9$ whose boundary can be moved slightly outward to give an open halfspace of mass less than $2/9$, by continuity from above. Compactness of $A_1$ then gives a finite cover of $\{1\}\times A_1$ by open halfspaces $H^1,\ldots,H^m$, each of mass less than $2/9$. Each $H^j$ meets the middle slice and is therefore not bounded by a plane parallel to the slices, since such a halfspace would contain the middle slice together with an outer slice and thus have mass at least $1/2$. Write $H_i^j=\{\x\in\R^2:(i,\x)\in H^j\}$ for $i=0,1,2$. Then $H_1^1,\ldots,H_1^m$ are open halfplanes covering $A_1$, with $|A_1\cap H_1^j|\le\mu(S\cap H^j)<2/9$.

\begin{figure}[t]
\centering
\begin{tikzpicture}[x=1cm,y=1cm,font=\small,line cap=round,line join=round,
  body/.style={draw=ink,line width=0.75pt},
  cut/.style={draw=ink,line width=0.55pt,postaction={decorate,draw=ink,line width=0.3pt,
    decoration={border,angle=-45,amplitude=0.85mm,segment length=1.3mm}}}]
  \def\Azero{(1.5,-0.8) -- (1.5,0.9) -- (-1.2,0.9) -- (-1.5,-0.8) -- cycle}
  \def\Aone{(1.6,-0.85) -- (1.6,0.95) -- (0.7,1.55) -- (-0.75,1.55) -- (-1.55,0.95) -- (-1.55,-0.85) -- cycle}
  \def\Atwo{(1.6,-0.8) -- (0.1,1.4) -- (-1.5,-0.8) -- cycle}
  \def\Kzero{(-0.7,0.9) -- (0.375,0.09375) -- (1.45,0.9) -- cycle}
  \def\Ktwo{({-131/170},{69/340}) -- (-0.375,-0.09375) -- ({233/380},{123/190}) -- (0.1,1.4) -- cycle}
  \def\Kmean{(0,0) -- ({98/95},{147/190}) -- (0.775,1.15) -- (-0.3,1.15) -- ({-25/34},{75/136}) -- cycle}
  \def\Uncovered{(0,0) -- ({121/85},{363/340}) -- (0.7,1.55) -- (-0.75,1.55) -- ({-169/120},{169/160}) -- cycle}

  \begin{scope}
    \node at (0,1.98) {$i=0$};
    \fill[black!3] \Azero;
    \fill[singleton!45] \Kzero;
    \draw[cut] (-1.0,1.125) -- (1.7,-0.9);
    \draw[cut] (-1.45,-1.275) -- (1.65,1.05);
    \draw[body] \Azero;
    \node at (0,-0.58) {$A_0$};
    \node at (0.375,0.61) {$K_0$};
  \end{scope}
  \begin{scope}[shift={(4.65,0)}]
    \node at (0,1.98) {$i=1$};
    \fill[black!3] \Aone;
    \fill[singleton!18] \Uncovered;
    \fill[singleton!45] \Kmean;
    \draw[cut] (-1.7,1.275) -- (1.7,-1.275);
    \draw[cut] (-1.7,-1.275) -- (1.7,1.275);
    \draw[body] \Aone;
    \draw[ink,densely dashed,line width=0.8pt] \Kmean;
    \node at (0,-0.61) {$A_1$};
    \node at (0.02,1.37) {$K_1$};
    \node[inner sep=0pt,font=\footnotesize] at (0.22,0.70) {$\dfrac{K_0+K_2}{2}$};
  \end{scope}
  \begin{scope}[shift={(9.3,0)}]
    \node at (0,1.98) {$i=2$};
    \fill[black!3] \Atwo;
    \fill[singleton!45] \Ktwo;
    \draw[cut] (-1.65,0.8625) -- (1.35,-1.3875);
    \draw[cut] (-1.7,-1.0875) -- (1.05,0.975);
    \draw[body] \Atwo;
    \node at (-0.375,-0.60) {$A_2$};
    \node at (-0.05,0.60) {$K_2$};
  \end{scope}
\end{tikzpicture}
\caption{The sets $K_i=A_i\setminus(H_i^1\cup H_i^2)$. Convexity of the complement of $H^1\cup H^2$ and the midpoint inclusion for the $A_i$ give $\frac12(K_0+K_2)\subseteq K_1$. The set $\frac12(K_0+K_2)$ is dashed, and $|K_1|=\sigma$.}
\label{fig:trimming}
\end{figure}

Apply \cref{lem:covering} to $A_1$ and these halfplanes with $\beta=2/9$, relabel the halfspaces $H^r,H^s$ given by the lemma as $H^1,H^2$, and set $K_i=A_i\setminus(H_i^1\cup H_i^2)$ for $i=0,1,2$. With $u=|K_0|$, $\sigma=|K_1|$, $v=|K_2|$, and $\tau=|A_1\cap H_1^1\cap H_1^2|$ (\cref{fig:trimming}), the lemma gives $\tau\ge\frac92\sigma^2$, while inclusion--exclusion yields
\[
\frac49>\mu(S\cap H^1)+\mu(S\cap H^2)=\mu(S\cap(H^1\cup H^2))+\mu(S\cap H^1\cap H^2)\ge1-u-\sigma-v+\tau.
\]
Consequently,
\begin{equation}\label{eq:uncovered}
u+v>\frac59+\tau-\sigma\ge\frac12+\frac92\left(\sigma-\frac19\right)^2.
\end{equation}

Because the complement of $H^1\cup H^2$ is closed and convex, the sets $K_0,K_1,K_2$ inherit the midpoint inclusion $\frac12(K_0+K_2)\subseteq K_1$. By \eqref{eq:uncovered} and the bounds $u\le|A_0|\le1/2$ and $v\le|A_2|\le1/2$, both $u$ and $v$ are positive, so the Brunn--Minkowski inequality applied to $K_0,K_2$ gives $\sqrt u+\sqrt v\le2\sqrt\sigma$. Moreover, $(1-2u)(1-2v)\ge0$ and \eqref{eq:uncovered} imply $4uv\ge2(u+v)-1>9(\sigma-1/9)^2$, hence $2\sqrt{uv}>3\sigma-1/3$. Squaring the Brunn--Minkowski bound and using \eqref{eq:uncovered} again, we obtain
\[
4\sigma\ge u+v+2\sqrt{uv}>4\sigma+\frac92\left(\sigma-\frac29\right)^2\ge4\sigma,
\]
a contradiction. For the mixed-integer conclusion, the three slices of positive area are consecutive, so translating them to $z=0,1,2$ and scaling the planar coordinates gives the hypotheses of the theorem by convexity.
\end{proof}

\pagebreak[3]
\section{Conclusions}\label{sec:conclusions}

We proved that if planar convex bodies $A_0,A_1,A_2$ satisfy $\frac12(A_0+A_2)\subseteq A_1$, then the union of the slices $\{i\}\times A_i\subseteq\R^3$ contains a point of halfspace depth at least $2/9$ of the total area, and that this constant is sharp. This settles the special case of Oertel's conjecture in $\Z\times\R^2$ with exactly three slices of positive area. The conjecture remains open in general, and our argument does not extend directly to more slices or to higher dimensions. With three slices, the midpoint inclusion allows the Brunn--Minkowski inequality to compare the areas that two halfspaces leave uncovered in the three slices, whereas four or more slices would require estimates for the uncovered areas of the additional slices. In $\Z\times\R^d$ with $d\ge3$, one would need an analogue of \cref{lem:covering} for the at most $d+1$ halfspaces supplied by Helly's theorem, and we are not aware of a higher-dimensional counterpart of Tsintsifas's inequality.

\paragraph{Acknowledgments.}
Both authors gratefully acknowledge support from the Air Force Office of Scientific Research (AFOSR) grant FA9550-25-1-0038 and the National Science Foundation (NSF) grant CMMI-2550265.

\paragraph{AI disclosure.}
The proof of \cref{thm:main} was obtained mainly by GPT-5.6 Sol~\cite{GPT56} and GPT-6 Astra~\cite{GPT6Astra} in Codex, using a fact graph system similar to Danus~\cite{Danus}. The resulting graph, available at \url{https://github.com/Hongyu-Cheng/oertel-three-slices-2d}, contains 651 facts, and the proof of \cref{thm:main} in it uses 31 of them and has depth 12. The authors rewrote this proof as the argument of \cref{sec:planar,sec:proof}, verified it, and take full responsibility for the paper.

\bibliographystyle{plainnat}
\bibliography{references}
\end{document}